\pgfplotsset{compat=1.17}
\theoremstyle{plain} 
\newtheorem{theorem}{Theorem}
\newtheorem{proposition}[theorem]{Proposition}
\newtheorem{corollary}[theorem]{Corollary}
\theoremstyle{definition} 
\newtheorem{definition}[theorem]{Definition}
\newtheorem{example}[theorem]{Example}
\newcommand{\R}{\mathbb{R}}
\newcommand{\Z}{\mathbb{Z}}
\newcommand{\Fi}{\mathbb{F}}
\newcommand{\B}{\mathcal{B}}
\newcommand{\norm}[1]{\left\|#1\right\|}
\DeclareMathOperator{\Span}{Span}
\newcommand\TODO[3]{\hbox to 0pt{\textcolor{#1}{$^\bullet$}}\marginpar{\footnotesize \textcolor{#1}{\begin{flushleft}#2: #3\end{flushleft}}}}
\title{About non-uniqueness when removing closed orbits in Morse-Smale vector fields}
\author{Clemens Bannwart\footnote{Department of Physics, Mathematics and Computer Science, University of Modena and Reggio Emilia, Italy,
\texttt{clemens.bannwart@gmail.com}}}
\date{}
\begin{document}

\maketitle

\begin{abstract}
    Given a Morse-Smale vector field on a smooth manifold, Franks \cite{Franks_MSflows_1979} described how one can replace a closed orbit of index $k$ by two rest points of index $k+1$ and $k$, using a local perturbation. Combined with classical results about gradient-like vector fields, this gives a method of assigning different topological or algebraic structures to Morse-Smale vector fields.
    We show that there are multiple non-equivalent ways of following this procedure and illustrate this non-uniqueness in various examples. We describe the consequences of this non-uniqueness to the endeavour of assigning CW complexes or chain complexes to Morse-Smale vector fields in a canonical way, contrary to what is stated in \cite[Corollary 5.2]{Franks_MSflows_1979} and \cite[Theorem 2.5]{EidiJost2022}, respectively.
\end{abstract}

\section{Why and how to remove closed orbits}

Assume that $M$ is a closed smooth manifold and $X$ is a Morse-Smale vector field on $M$. A lot of information can be gained by studying the critical elements of $X$, i.e. the rest points and closed orbits. An index can be assigned to each critical element, describing the local behaviour of $X$. Generalized Morse inequalities were proven by Smale, relating the number of critical elements of each index of $X$ to the Betti numbers of $M$~\cite{Smale1960MorseIineq}. A special case that is better understood is when $X$ is gradient-like, i.e. has no closed orbits. In that case it was pointed out in \cite{Franks_MSflows_1979}, that by following \cite{Milnor1963MorseTheory} we can construct a CW complex $Y$ whose $k$-cells are in one-to-one correspondence with the rest points of index $k$ of $X$. This CW complex is defined uniquely up to cell equivalence and there exists a homotopy equivalence $M\to Y$ that maps the unstable manifold of every rest point into the closure of the corresponding cell. Another structure that can be defined in terms of the critical elements of $X$ is the Morse complex (also sometimes denoted by some combination of the names Morse, Smale, Witten, Thom, Floer, see \cite{Bott_Indomitable88} for a historical discussion). This is a chain complex with coefficients in some field $\Fi$ (often $\Fi=\Z_2$ is chosen for easier computations), such that for every $k$, the vector space in degree $k$ has a basis corresponding to the rest points of index $k$ of $X$. The differential of this chain complex is defined by counting certain flow lines and its homology agrees with the singular homology of $M$, see e.g. \cite{BanyagaLecturesOnMorseHomology} for details.

The question arises whether we can define analogous structures more generally for Morse-Smale vector fields also in the presence of closed orbits. An indicator for this is the following result by Franks, allowing us to replace a closed orbit of index $k$ by a pair of rest points of index $k+1$ and $k$, respectively.

\begin{proposition}[{\cite[Proposition 5.1]{Franks_MSflows_1979}}] \label{prop:replace-closed-orbit}
    Suppose $X$ is a Morse-Smale vector field on an orientable manifold whose flow has a closed orbit $\gamma$ of index $k$ in standard form. Then given a small neighborhood $U$ of $\gamma$, there exists a Morse-Smale vector field $X'$ which agrees with $X$ outside $U$ and in $U$ has rest points $p$, $q$ of index $k+1$ and $k$ but no other rest points or closed orbits. Also, $q$ is a successor to $p$ and their connecting manifold consists of two points framed with opposite orientations if the original closed orbit $\gamma$ was untwisted and otherwise framed with the same orientation. Moreover the unstable manifold for $\gamma$ will be equal to $W^u(p) \cup W^u(q)$.
\end{proposition}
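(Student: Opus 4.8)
The plan is to work entirely inside a model neighborhood of $\gamma$ furnished by the standard-form hypothesis, perturb only the ``circle direction'', and then read off every stated property from the explicit model. First I would fix standard coordinates on $U$, presenting it as the total space of a bundle over $S^1 = \R/\Z$ with fiber $\R^k \times \R^{n-1-k}$ and coordinates $(\theta,x,y)$, in which $X$ takes the normal form $\dot\theta = 1$, $\dot x = Ax$, $\dot y = -By$ with $A,B$ positive definite and fibers glued by a monodromy $(x,y)\mapsto(M^u x, M^s y)$. Here the unstable normal bundle $E^u$ (the $x$-directions over $\gamma$) is orientable precisely when $\det M^u>0$, and this is what separates the untwisted from the twisted case. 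Since $M$ is orientable and $T\gamma \oplus E^u \oplus E^s = TM|_\gamma$, orientability of $E^u$ is equivalent to that of $E^s$, so the twisted/untwisted dichotomy is well defined and I may fix a global orientation of $M$ once and for all.

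Next I would define the perturbation by replacing the circle component $\dot\theta=1$ with $\dot\theta = 1 + \rho(|x|^2+|y|^2)\bigl(f(\theta)-1\bigr)$, where $f\colon S^1\to\R$ has exactly two nondegenerate zeros, a source at $\theta=0$ and a sink at $\theta=\tfrac12$, and $\rho$ is a cutoff equal to $1$ near the core $\{x=y=0\}$ and to $0$ near $\partial U$. Leaving the transverse components untouched, the resulting $X'$ agrees with $X$ outside $U$, and the neighborhood may be taken as small as desired. The only rest points are the two core points $p=(0,0,0)$ and $q=(\tfrac12,0,0)$, because off the core the transverse flow never vanishes; at $p$ the unstable directions are $\partial_\theta$ together with the $x$-directions (index $k+1$), while at $q$ they are the $x$-directions alone (index $k$). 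I would then rule out closed orbits of $X'$ in $U$: any recurrent orbit must satisfy $x\equiv 0$ by the transverse expansion, and on $\{x=0\}$ the flow is gradient-like in $(\theta,y)$ and runs into $q$. Transversality of the new invariant manifolds, both among themselves and against the unchanged exterior dynamics, follows from the product structure of the model, so $X'$ is again Morse--Smale, and tracing the flow shows $W^u(\gamma)=W^u(p)\cup W^u(q)$.

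The main obstacle, and the part I would treat most carefully, is the last sentence concerning the framing of the connecting manifold. Modulo the flow, the set $W^u(p)\cap W^s(q)$ is exactly the two arcs of the circle $\{x=y=0\}$ running from the source $p$ to the sink $q$, so the connecting manifold is two points and the content lies entirely in their signs. I would compute each sign in the standard way, comparing the fixed orientation of $W^u(p)$, transported along the arc, with the flow direction wedged into the orientation of $W^u(q)$. Two effects enter: the two arcs reach $q$ with opposite velocity vectors ($+\partial_\theta$ along one arc, $-\partial_\theta$ along the other), contributing a relative sign $-1$; and transporting an $x$-frame from $p$ to $q$ along the two arcs differs by the monodromy $M^u$, contributing a relative sign $\operatorname{sgn}\det M^u$. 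Their product shows that the two points carry opposite signs when $\det M^u>0$ (untwisted) and equal signs when $\det M^u<0$ (twisted), which is precisely the asserted framing, and in particular exhibits $q$ as a successor of $p$. I expect the bookkeeping of orientation conventions here, keeping the flow-reversal sign and the monodromy sign consistent with the single chosen orientation of $M$, to be the only genuinely delicate step, the remainder being a direct reading of the normal form.
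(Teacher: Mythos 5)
Your construction is the same one the paper (and Franks) has in mind: the paper does not reprove this proposition but only sketches it as ``reverse the direction of $X$ on one half of the orbit,'' and your replacement of $\dot\theta=1$ by $f(\theta)$ with one source and one sink on the core circle is exactly that sketch made precise. The local analysis in the model (location and indices of $p$ and $q$, absence of rest points and of closed orbits contained in $U$, and the sign computation for the two framed points via the flow-direction flip combined with $\operatorname{sgn}\det M^u$) is correct and matches the statement.

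The genuine gap is the sentence claiming that transversality ``against the unchanged exterior dynamics follows from the product structure of the model.'' The product structure only gives you transversality \emph{inside} $U$, i.e.\ $W^u(p)\pitchfork W^s(q)$. Globally, $W^u(q)$ is a $k$-dimensional submanifold of the $(k+1)$-dimensional $W^u(\gamma)$, and the hypothesis $W^u(\gamma)\pitchfork W^s(\beta)$ for the other critical elements $\beta$ does \emph{not} imply $W^u(q)\pitchfork W^s(\beta)$: that requires the additional condition that $W^u(q)$ be transverse, inside $W^u(\gamma)$, to $W^u(\gamma)\cap W^s(\beta)$, which depends on where on the circle you place $q$ and can fail. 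For instance, in the two-dimensional examples of \Cref{sec:CW}, a bad placement of $q$ sends its unique outgoing separatrix into the stable manifold of an existing saddle, producing a saddle connection, so the resulting field is not Morse--Smale. One must either choose the angular positions of $p,q$ generically or perform a further $C^1$-small perturbation; this step is unavoidable, and the residual freedom in making it is precisely the source of the non-uniqueness the paper is about. A second, more minor omission of the same flavour: you only exclude closed orbits of $X'$ \emph{contained} in $U$, whereas one must also rule out new periodic orbits passing through $U$ and closing up outside, which needs $U$ to be an isolating neighbourhood adapted to the filtration of the original Morse--Smale flow (no cycles among basic sets), not just ``small.''
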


\begin{figure}[t]
    \centering
    \includegraphics[width=0.35\linewidth]{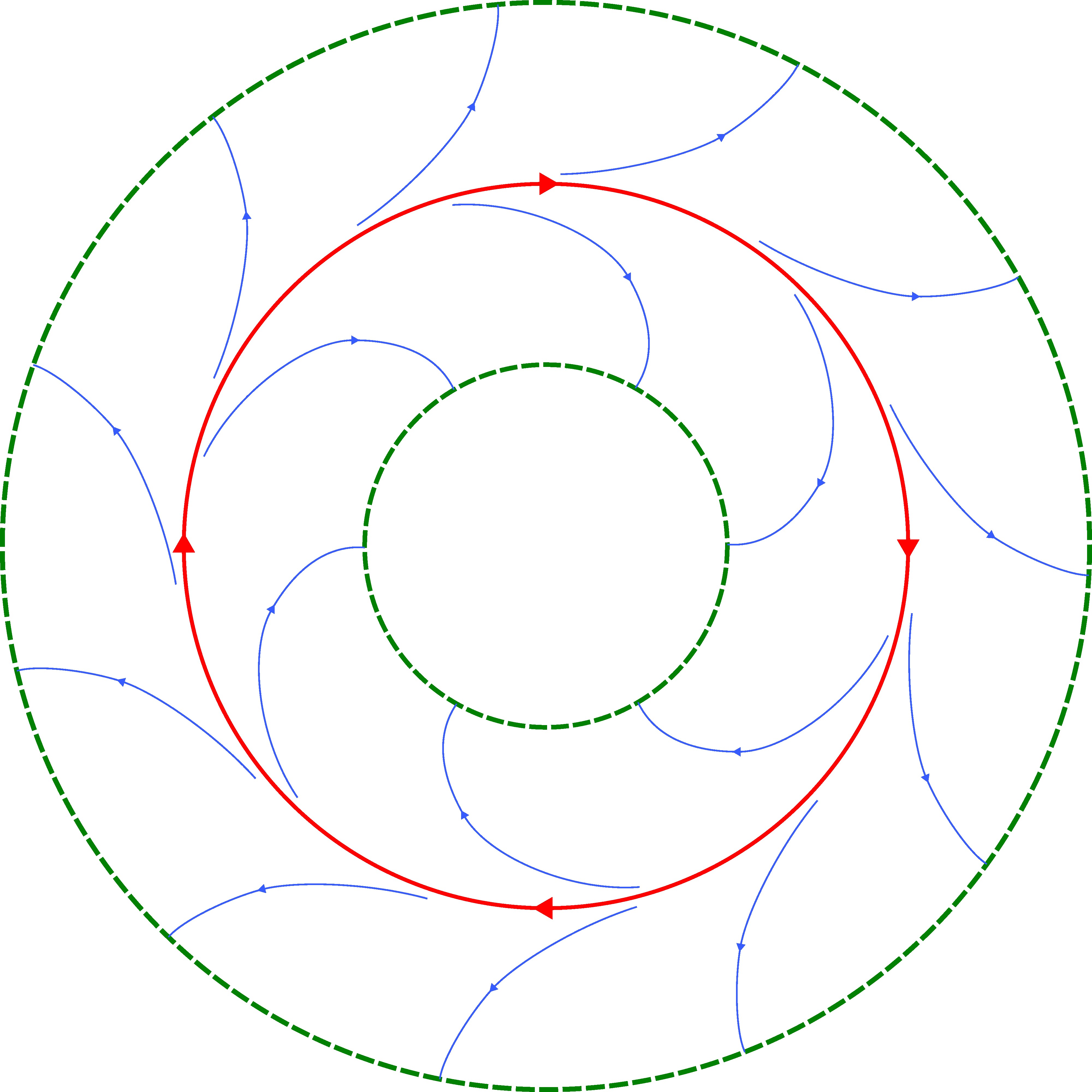}
    \hspace{0.1\linewidth}
    \includegraphics[width=0.35\linewidth]{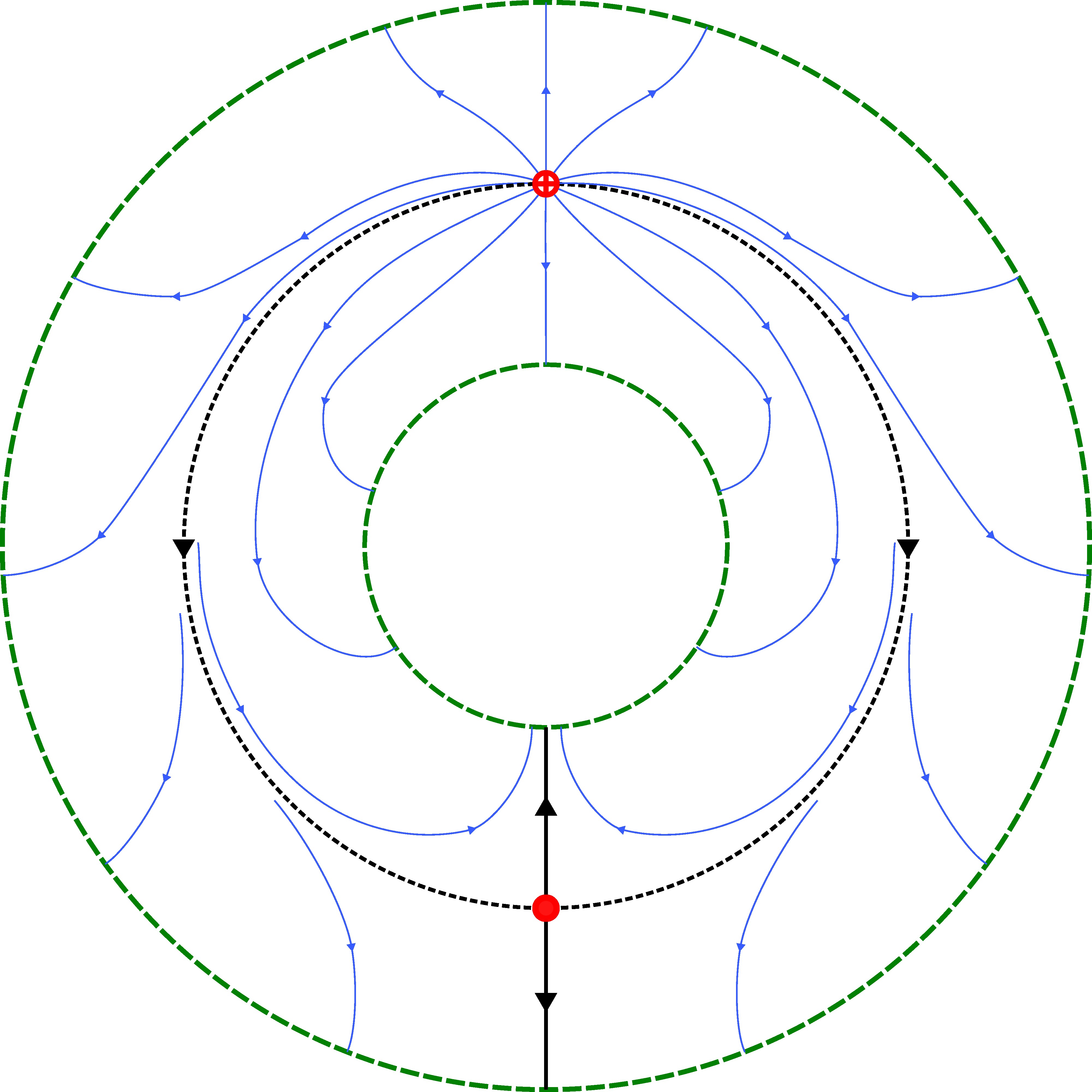}
    \caption{By \Cref{prop:replace-closed-orbit}, we can replace a closed orbit of index $k$ (left panel) by two rest points of index $k+1$ and $k$ (right panel). The vector field stays unchanged outside of a neighbourhood of the orbit, which can be chosen arbitrarily small.}
    \label{fig:perturb}
\end{figure}

Roughly speaking, this works by reversing the direction of $X$ on one half of the orbit, thereby creating two rest points that lie on opposite ends of the original closed orbit. The other points of what before was a closed orbit now form two flow lines from the rest point of index $k+1$ to the rest point of index $k$, see \Cref{fig:perturb}. By repeating this for every closed orbit, we can replace $X$ by a gradient-like vector field $X'$. 

This yields the following pipeline: Start with a Morse-Smale vector field $X$. Replace all closed orbits by pairs of rest points according to \Cref{prop:replace-closed-orbit} to obtain a gradient-like Morse-Smale vector field $X'$. Use the known results to assign some structure to $X'$ (e.g. a CW complex or a chain complex). Consider the choices that were made when perturbing $X$ to $X'$ and check that the resulting structure is unique (up to some equivalence relation).

We first examine the uniqueness question for CW complexes assigned to Morse-Smale vector fields. It was claimed in \cite[Corollary 5.2]{Franks_MSflows_1979} that this can be used to assign a CW complex to $X$ which is unique up to cell equivalence. In \Cref{sec:CW} we review the definition of cell equivalence and some relevant results, then provide examples that illustrate that the CW complex resulting from this procedure may not be unique up to cell equivalence, showing that the uniqueness part in \cite[Corollary 5.2]{Franks_MSflows_1979} is incorrect.

Next we examine the uniqueness question for assigning chain complexes to Morse-Smale vector fields. 
In \Cref{sec:chain} we consider the vector spaces and linear maps defined in \cite{EidiJost2022}. We show that in dimension two, these maps square to zero, but the resulting homology may differ from the singular homology of the underlying manifold. Then we present an example in dimension three where these maps do not square to zero, thereby showing that \cite[Theorem 2.5]{EidiJost2022} is incorrect.

Let us make some remarks about how we display our examples. We draw rest points and closed orbits in red. For rest points, we use different symbols, according to the index. Rest points of index $0$ (sinks) are drawn as a minus sign. Rest points of index $1$ (saddles) are drawn as a filled dot. Rest points of index $2$ (sources) are drawn as a plus sign. In \Cref{subfig:MSvf-R3} we have a three-dimensional vector field, where we draw the rest points of index $3$ as a hollow dot. \Cref{fig:closed-orbit-1,fig:perturbation1,fig:perturbation2} display vector fields on the $2$-sphere, which we visualize as a disk, the green boundary representing a single point of the sphere. This point is a rest point of index $0$ of the vector field in each case. Flow lines flowing to and from saddle points are drawn in black. Other flow lines are drawn in blue, with thinner lines. In some cases, these additional flow lines are not drawn, as their exact shape is not relevant for our purposes.

\section{Non-uniqueness for CW complexes}\label{sec:CW}

We recall the definition of cell equivalence, since it is a rather specific notion of \cite{Franks_MSflows_1979}. We start by introducing the face poset.
If $e$ and $e'$ are cells of a CW complex $Y$, we will say that $e' \le e$ if the closure of $e$ contains any part of the interior of $e'$. If we make this relation transitive (and denote the new transitive relation by $\le$ as well) then we obtain a partial ordering on the cells of $Y$. If $S$ is a subset of the cells of $Y$ with the property that $e \in S$ and $e'\le e$ implies $e' \in S$, then the union of the cells in $S$ forms a subcomplex of $Y$. In particular, if $e$ is a cell of $Y$ we define the \textbf{base} of $e$, denoted $Y(e)$, to be the smallest subcomplex of $Y$ containing $e$. Thus $Y(e)$ is the union of all cells $e'$ in $Y$ such that $e'\le e$.

\begin{definition}[{\cite[Definition 2.1]{Franks_MSflows_1979}}]\label{def:cell-equivalence-original}
    Two finite CW complexes $Y$ and $Y'$ will be called \textbf{cell equivalent} providing there is a homotopy equivalence $h\colon Y \to Y'$ with the property that there is a one-to-one correspondence between cells of $Y$ and cells of $Y'$ such that if $e\subseteq Y$ corresponds to $e'\subseteq Y'$ then $h$ maps $Y(e)$ to $Y'(e')$ and is a homotopy equivalence of these subcomplexes.
\end{definition}

There is a problem with this definition of cell equivalance, namely that it does not yield an equivalence relation. This is showcased by \Cref{fig:cell-equiv-not-sym}, where we have two CW complexes, $Y,Y'$, such that a cell equivalence $h\colon Y\to Y'$ exists, but not in the other direction. We thus present an alternative version of the definition of cell equivalence, where we demand the existence of a homotopy inverse that is also a cell equivalence and that the base of any cell gets mapped surjectively onto the base of its corresponding cell.

\begin{figure}[t]
    \centering
    \begin{subfigure}{0.4\linewidth}
      \includegraphics[height=3cm]{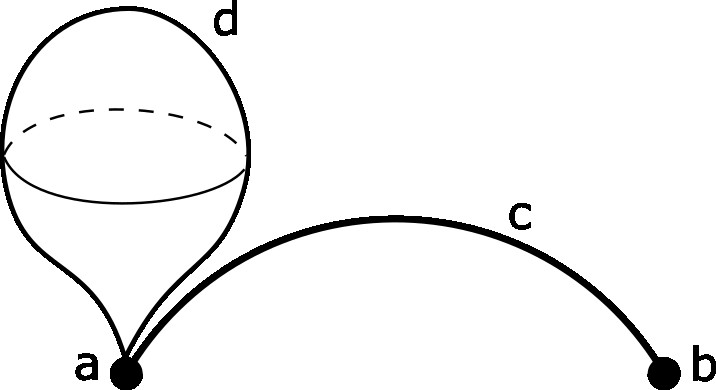}
      \caption{The CW complex $Y$ consists of the 0-cells $a$ and $b$, the 1-cell $c$, and the 2-cell $d$, which is attached to the 0-cell $a$.}
    \end{subfigure}
    \hspace{0.15\linewidth}
    \begin{subfigure}{0.4\linewidth}
      \includegraphics[height=3.2cm]{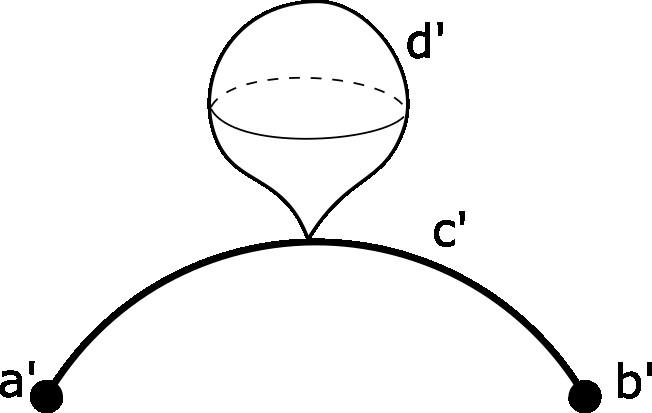}
      \caption{The CW complex $Y'$ consists of the 0-cells $a'$ and $b'$, the 1-cell $c'$, and the 2-cell $d'$, which is attached in the middle of the 1-cell $c'$.}
    \end{subfigure}
    \caption{In $Y$, the base of the cell $d$ consists only of the cells $d$ and $a$. In $Y'$, the base of the cell $d'$ is all of $Y'$. The map $h\colon Y \to Y'$ that maps $a\mapsto a'$, $b\mapsto b'$, $c\mapsto c'$, the bottom half of $d$ to the left half of $c'$, and the upper half of $d$ to $d'$, is a cell equivalence according to \Cref{def:cell-equivalence-original}. However, no cell equivalence exists in the other direction, so it does not satisfy the stronger \Cref{def:cell-equivalence-symmetric}.}
    \label{fig:cell-equiv-not-sym}
\end{figure}

\begin{definition}\label{def:cell-equivalence-symmetric}
    Two finite CW complexes $Y$ and $Y'$ are called \textbf{cell equivalent} if there are maps $f\colon Y \to Y'$ and $g\colon Y' \to Y$ that are homotopy inverses to each other, such that there exists a one-to-one correspondence between the cells of $Y$ and the cells of $Y'$ such that if $e\subset Y$ corresponds to $e'\subset Y$, then $f(Y(e)) = Y'(e')$, $g(Y'(e')) = Y(e)$, and the restrictions of $f$ to $Y(e)$ and of $g$ to $Y'(e')$ are homotopy inverses to each other.
\end{definition}

In this version, symmetry is already built into the definition, so it becomes straightforward to check that it is indeed an equivalence relation on finite CW complexes. Moreover, if $Y$ and $Y'$ are cell equivalent in the sense of \Cref{def:cell-equivalence-symmetric}, it follows that their face posets are isomorphic. 

Given a closed smooth manifold $M$ and a critical element $\beta$ of a vector field on $M$, we denote by $W^s(\beta)$ and $W^u(\beta)$ its stable and unstable manifolds. See \cite{Franks_MSflows_1979} for the necessary definitions and background. 
The following statement is a reformulation of a fundamental result from Morse theory, stating that we can assign to each gradient-like Morse-Smale vector field a CW complex $Y$ unique up to cell equivalence, with the homotopy type of the underlying manifold. 

\begin{theorem}[{\cite[Theorem 2.3]{Franks_MSflows_1979}}]\label{thm:morse-theory-main}
    If $X$ is a gradient-like Morse-Smale vector field on $M$, then there exists a CW complex $Y$, unique up to cell equivalence, and a homotopy equivalence $g\colon M \to Y$ such that for each rest point $p$ of index $k$, $g(W^u(p))$ is contained in the base $Y(e)$ of a single $k$-cell $e$.

    In this way $g$ establishes a one-to-one correspondence between rest points of $X$ of index $k$ and $k$-cells of $Y$. Moreover, the partial order $<$ on rest points defined by $q \le p$ if and only if $W^s(q) \cap W^u(p) \neq \emptyset$ corresponds to the partial order $\le$ on the cells of $Y$.
\end{theorem}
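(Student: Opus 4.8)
The plan is to derive the statement from the classical construction of a CW structure out of a Morse function and then to show that the Morse--Smale condition forces that CW structure to encode the dynamical partial order intrinsically. First I would fix a self-indexing Lyapunov function $f\colon M\to\R$ for $X$, i.e.\ a Morse function along which $X$ is gradient-like and for which every rest point of index $k$ lies on the level $f=k$; the existence of such an $f$ is standard for gradient-like Morse--Smale fields. Writing $M^{a}=f^{-1}((-\infty,a])$ for the sublevel sets, the fundamental theorem of Morse theory \cite{Milnor1963MorseTheory} tells us that as $a$ increases past a critical level carrying rest points of index $k$, the set $M^{a}$ is obtained up to homotopy by attaching one $k$-cell for each such rest point. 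Iterating over all critical levels produces a CW complex $Y$ together with a homotopy equivalence $g\colon M\to Y$ having exactly one $k$-cell $e_{p}$ for every rest point $p$ of index $k$; this already yields the asserted one-to-one correspondence between rest points of index $k$ and $k$-cells.

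Next I would establish the geometric compatibility. The core dynamical input is the structure of the closure of an unstable manifold: under the Morse--Smale hypothesis, $\overline{W^{u}(p)}$ is a union of unstable manifolds $W^{u}(q)$, taken over those rest points $q$ reachable from $p$ by a (possibly broken) flow line, equivalently $q\in\overline{W^u(p)}$. Arranging the characteristic maps so that $g$ carries $W^{u}(p)$ onto the open cell $e_{p}$ and $\overline{W^{u}(p)}$ into the already-built skeleton, we get $g\bigl(\overline{W^{u}(p)}\bigr)\subseteq Y(e_{p})$, which is precisely the base condition. For the poset statement, the generating relation of the face poset, that part of the interior of $e_{q}$ meets $\overline{e_{p}}$, translates under $g$ into $W^{u}(q)\cap\overline{W^{u}(p)}\neq\emptyset$, hence into $q\in\overline{W^{u}(p)}$; since the relation $W^{s}(q)\cap W^{u}(p)\neq\emptyset$ records exactly the \emph{unbroken} flow lines, its transitive closure coincides with the broken-trajectory relation, and therefore with the transitive closure defining the face poset. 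This identifies the two partial orders.

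Finally I would treat uniqueness up to cell equivalence, which I expect to be the main obstacle. The construction depends on several choices (the Lyapunov function, the orderings within a level, and the characteristic maps), and one must show any two resulting complexes $Y,Y'$ are cell equivalent in the sense of \Cref{def:cell-equivalence-symmetric}. The essential point is that each base is intrinsic to $X$: up to homotopy, $Y(e_{p})$ is the image of the closed invariant set $\overline{W^{u}(p)}$, whose homotopy type and whose position in the partial order do not depend on the choices. The difficulty is that the definition demands \emph{exact} equalities $f(Y(e))=Y'(e')$ and $g(Y'(e'))=Y(e)$ on subcomplexes, not merely up to homotopy, so one cannot simply compose $g'$ with a homotopy inverse of $g$. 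Instead I would build $f$ and $g$ simultaneously by induction on the partial order: on minimal cells the bases are points and the maps are forced, and at each step I would extend over the cell $e_{p}$ using that the two attaching maps model the same $\overline{W^{u}(p)}$ and are therefore homotopic, invoking the homotopy extension property to realize the extension cellularly and to keep $f|_{Y(e)}$ and $g|_{Y'(e')}$ mutually inverse on the nose at the level of homotopy. Making these restrictions inverse \emph{and} surjective on bases at every stage of the induction, rather than only producing a single global homotopy equivalence, is where the real work lies.
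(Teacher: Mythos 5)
The paper does not actually prove this statement: it is imported verbatim as \cite[Theorem 2.3]{Franks_MSflows_1979} and used as a black box, so there is no in-paper argument to compare yours against. Judged on its own, your outline follows the standard (and, essentially, Franks's own) route: build $Y$ from a self-indexing Lyapunov function via \cite{Milnor1963MorseTheory}, then use the Morse--Smale structure of $\overline{W^u(p)}$ as a union of unstable manifolds to obtain $g(W^u(p))\subseteq Y(e_p)$ and to match the face poset with the flow-defined order. These parts are correct in outline, though the translation of ``$\overline{e_p}$ meets the interior of $e_q$'' into ``$W^u(q)\subseteq\overline{W^u(p)}$'' needs more than the containment $g(\overline{W^u(p)})\subseteq Y(e_p)$: you must also rule out that the attaching map of $e_p$ hits cells of rest points \emph{not} lying in $\overline{W^u(p)}$, which is exactly where the $\lambda$-lemma and the transversality hypothesis do their work. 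Also, $g$ need not carry $W^u(p)$ \emph{onto} the open cell $e_p$; the theorem only requires containment in the base, and the collapsing maps of the Milnor construction do not give more.

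The genuine gap is uniqueness. You correctly identify that \Cref{def:cell-equivalence-symmetric} demands exact equalities $f(Y(e))=Y'(e')$ and $g(Y'(e'))=Y(e)$ with mutually inverse restrictions, and that composing one global homotopy equivalence with a homotopy inverse of another does not deliver this; but the inductive construction you sketch is only a plan. The key step --- showing that the two attaching maps of $e_p$ and $e'_p$ are homotopic \emph{through maps that respect all the lower bases simultaneously}, so that the extension over $e_p$ restricts to the required base-preserving equivalence at every stage --- is asserted rather than argued, and it is precisely where such a proof can fail. Note too that the paper itself points out that Franks's original notion of cell equivalence (\Cref{def:cell-equivalence-original}) is not symmetric and replaces it by the stronger \Cref{def:cell-equivalence-symmetric}; your uniqueness argument should say explicitly which notion it targets, since the stronger one is what the surrounding text relies on and is strictly harder to verify.
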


Combining \Cref{thm:morse-theory-main} with \Cref{prop:replace-closed-orbit}, we get the following result, stating that we can do the same thing also for general Morse-Smale vector fields. The underlined part about uniqueness is not true, as it is showcased next by \Cref{ex:minimal-example,ex:not-cell-equiv}. If we delete that part the statement becomes correct. The author of \cite{Franks_MSflows_1979} agrees with this assessment \cite{FranksLetter}.

\begin{corollary}[{\cite[Corollary 5.2]{Franks_MSflows_1979}}] \label{cor:ms-field-cw-complex}
    For any Morse-Smale vector field $X$ on a compact manifold $M$ there is a CW complex $Y$ homotopy equivalent to $M$ \underline{and unique up to cell equivalence} associated to $X$. Corresponding to each rest point of $X$ of index $k$ there is a $k$-cell of $Y$, and to each closed orbit of $X$ of index $k$ there are cells of $Y$ of dimension $k+1$ and $k$ attached by a map of degree $0$ or $2$ depending on whether or not the orbit is twisted.
\end{corollary}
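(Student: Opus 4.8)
The plan is to run precisely the pipeline sketched before the statement: first use \Cref{prop:replace-closed-orbit} to trade each closed orbit for a pair of rest points, turning $X$ into a gradient-like field, and then feed that field into \Cref{thm:morse-theory-main}. Since $M$ is compact and $X$ is Morse-Smale, there are finitely many critical elements. Working under the hypotheses of \Cref{prop:replace-closed-orbit} (in particular orientability), I would first put each closed orbit $\gamma_i$ of index $k_i$ into standard form and then choose neighborhoods $U_i$ small enough to be pairwise disjoint and to contain no critical element other than $\gamma_i$. Applying \Cref{prop:replace-closed-orbit} inside each $U_i$ independently — the local perturbations do not interfere, because the $U_i$ are disjoint and $X$ is left unchanged outside them — produces a vector field $X'$ on the same manifold $M$ that is still Morse-Smale but now has no closed orbits, i.e. is gradient-like. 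Its rest points are the original rest points of $X$ together with, for each $\gamma_i$, a pair $p_i,q_i$ of indices $k_i+1$ and $k_i$.

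Next I would apply \Cref{thm:morse-theory-main} to $X'$, obtaining a CW complex $Y$, a homotopy equivalence $g\colon M\to Y$, and a one-to-one correspondence between index-$k$ rest points of $X'$ and $k$-cells of $Y$. Translating this correspondence back through the first step yields the claimed cell structure: an index-$k$ rest point of $X$ contributes a single $k$-cell, while an index-$k$ closed orbit contributes the $(k+1)$-cell and the $k$-cell coming from $p_i$ and $q_i$. The homotopy equivalence $Y\simeq M$ is inherited directly from $g$.

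For the degree of the attaching map of the $(k+1)$-cell onto the $k$-cell inside a pair $p_i,q_i$, I would combine the last part of \Cref{thm:morse-theory-main} — which matches the flow-defined partial order on rest points with the face relation of $Y$ — with the standard Morse-theoretic computation of incidence numbers: the degree with which the cell of $p_i$ attaches along the cell of $q_i$ is the signed count of connecting flow lines, read off from the framing of the connecting manifold. By \Cref{prop:replace-closed-orbit} this connecting manifold consists of two points, framed with opposite orientations when $\gamma_i$ is untwisted and with the same orientation when it is twisted, giving signed counts $0$ and $\pm2$ respectively, hence a map of degree $0$ or $2$ as asserted.

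The genuinely hard — indeed false — part is the underlined uniqueness clause, and I expect this to be exactly where the argument must stop. \Cref{thm:morse-theory-main} already guarantees that, for a \emph{fixed} gradient-like field, $Y$ is determined up to cell equivalence, so the only residual ambiguity is the passage from $X$ to $X'$: the neighborhoods $U_i$, the standard-form normalization, and the local models used in \Cref{prop:replace-closed-orbit}. Proving uniqueness would require that any two admissible such choices yield cell-equivalent complexes, and this is precisely what cannot be done — the subsequent examples exhibit two legitimate outcomes of the procedure whose CW complexes have non-isomorphic face posets, hence are not cell equivalent in the sense of \Cref{def:cell-equivalence-symmetric}. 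The proof should therefore establish every assertion of the statement except this clause.
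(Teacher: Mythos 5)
Your proposal follows essentially the same route as the paper's own proof sketch: apply \Cref{prop:replace-closed-orbit} to each closed orbit (in disjoint neighborhoods) to obtain a gradient-like field $X'$, then invoke \Cref{thm:morse-theory-main}, and correctly observe that the underlined uniqueness clause is exactly the part that fails because of the choices made in the perturbation step. Your additional remarks on the degree of the attaching maps and on why the local perturbations do not interfere are consistent with, and slightly more detailed than, the paper's argument.
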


The proof of \Cref{cor:ms-field-cw-complex} (without the underlined part) goes as follows: Given a Morse-Smale vector field $X$, apply \Cref{prop:replace-closed-orbit} to each closed orbit in order to obtain a gradient-like vector field $X'$. Every rest point of $X$ corresponds to a rest point of $X'$ of the same index and every closed orbit of index $k$ of $X$ corresponds to two rest points of $X'$, of index $k+1$ and $k$, respectively. We can then apply \Cref{thm:morse-theory-main} to $X'$ in order to get a CW complex.

The CW complex assigned to $X'$ is unique up to cell equivalence, but when replacing $X$ by $X'$ there are some choices involved, which can result in different CW structures. We now present two examples that illustrate this.

\begin{example}\label{ex:minimal-example}
    In \Cref{fig:perturbation1}, we describe the minimal example illustrating the choice when removing a closed orbit. However, in this example, both choices still lead to two cell equivalent CW complexes. The cell equivalence is given by swapping the cells corresponding to $q_1$ and $q_2$. 
\end{example}

\begin{figure}[t]
    \centering
    \begin{subfigure}{0.3\linewidth}
      \includegraphics[width=\linewidth]{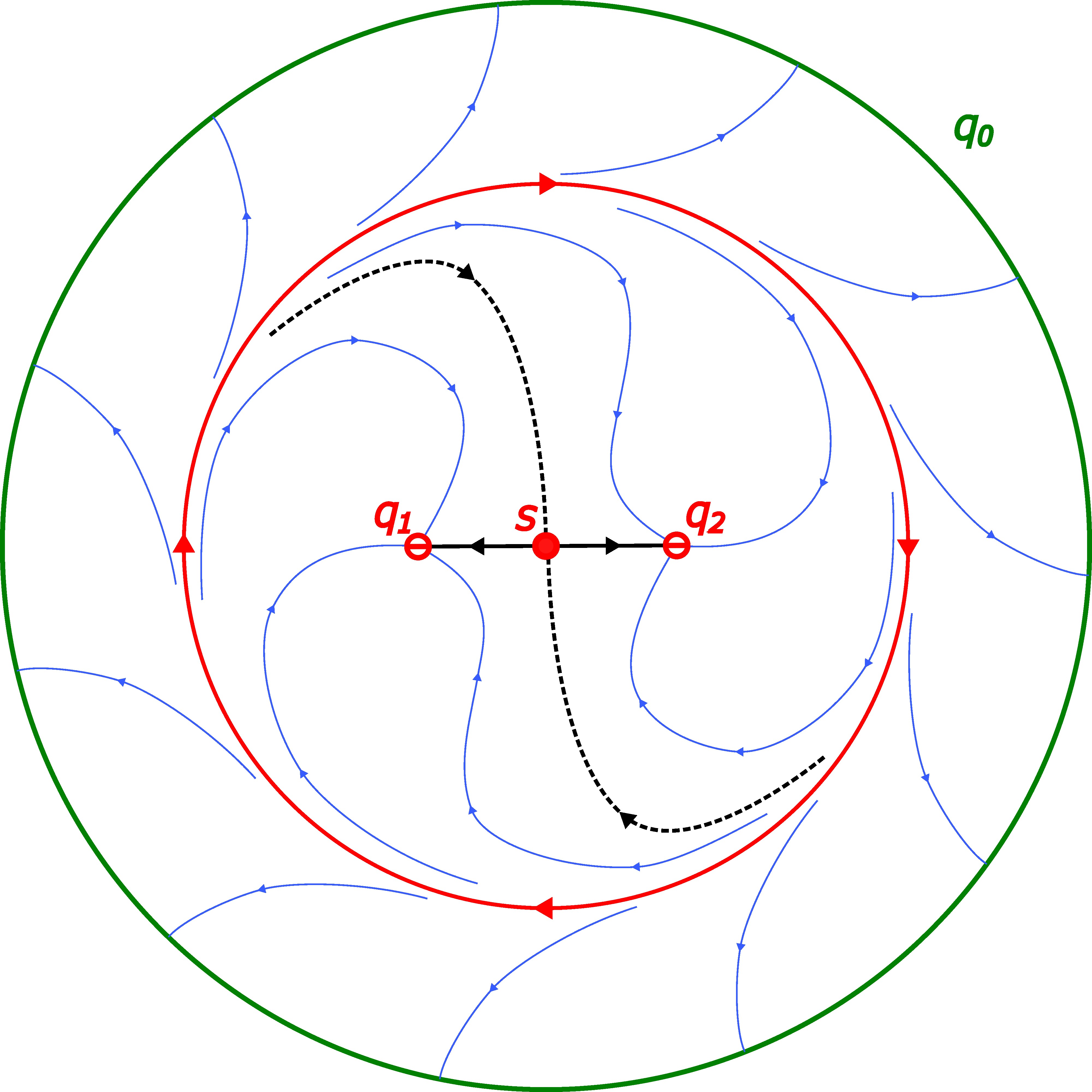}
      \caption{Before the perturbation, the vector field contains a closed orbit.}\label{subfig:pert1-original-with}
    \end{subfigure}
    \hspace{0.02\linewidth}
    \begin{subfigure}{0.31\linewidth}
      \includegraphics[width=\linewidth]{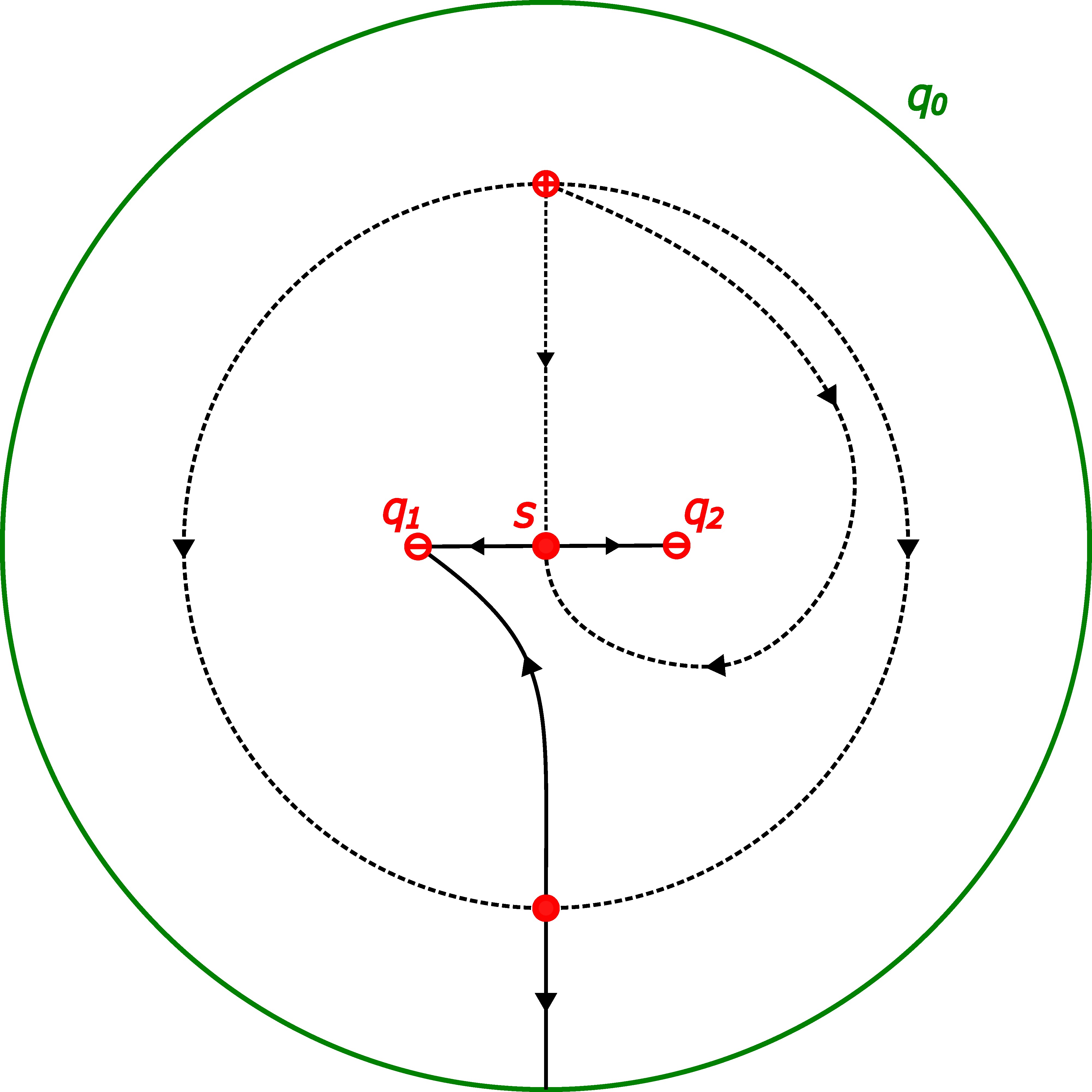}
      \caption{In the first perturbation, the unstable manifold of the new saddle intersects the stable manifold of $q_1$.}\label{subfig:pert1-1}
    \end{subfigure}
    \hspace{0.02\linewidth}
    \begin{subfigure}{0.31\linewidth}
      \includegraphics[width=\linewidth]{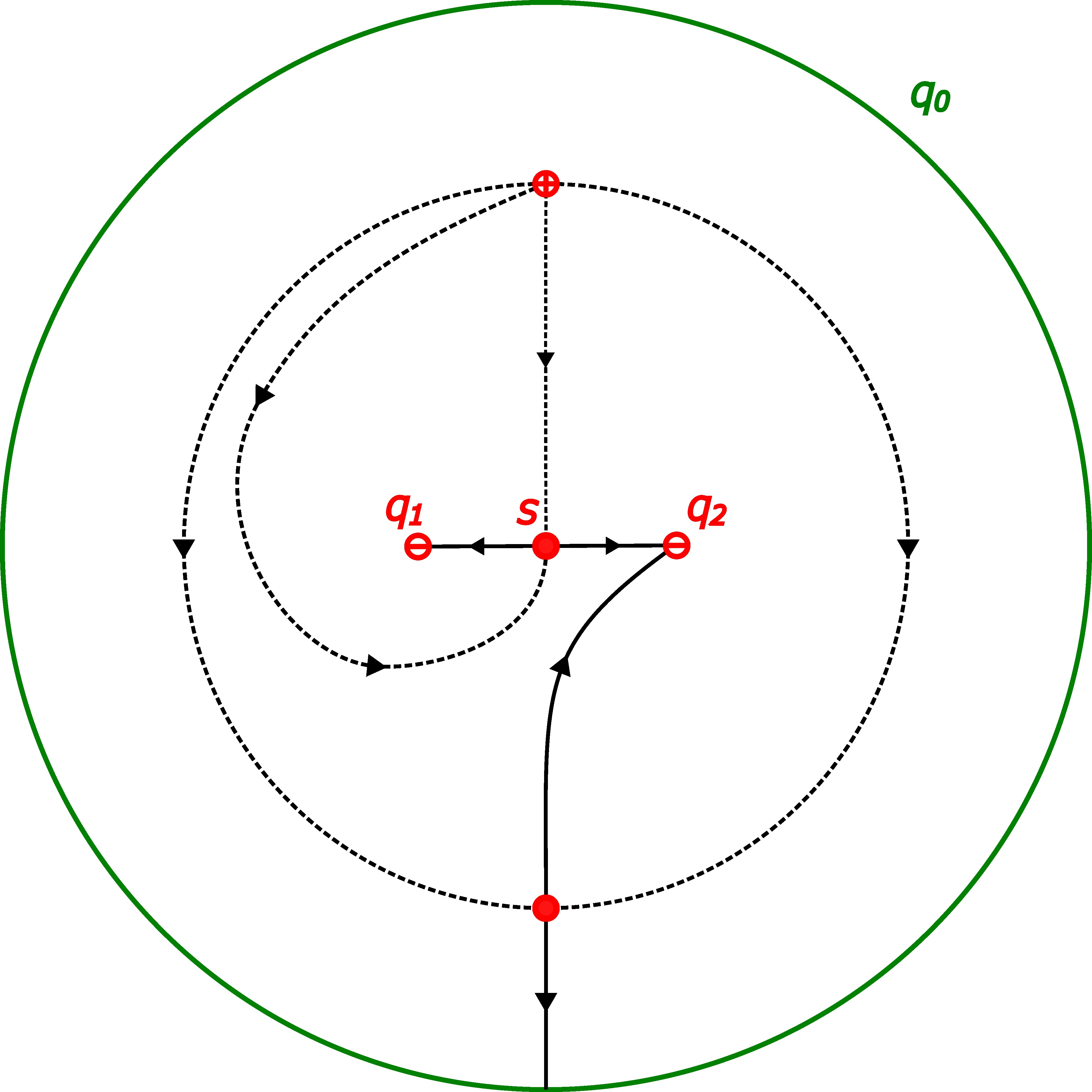}
      \caption{In the second perturbation, the unstable manifold of the new saddle intersects the stable manifold of $q_2$.}\label{subfig:pert1-2}
    \end{subfigure}
    \caption{This vector field has three sinks $q_0$, $q_1$, $q_2$, one saddle $s$ and one repelling orbit $\gamma$. Due to \Cref{prop:replace-closed-orbit} the repeller can be replaced by a source and a saddle. It is not clear a priori if the newly created saddle connects to $q_1$ or $q_2$. Note that in \Cref{subfig:pert1-original-with}, we draw some of additional flow lines in order to visualize the flow of the vector field. In \Cref{subfig:pert1-1} and \Cref{subfig:pert1-2} we omit them.}
    \label{fig:perturbation1}
\end{figure}

We now present a slightly more complicated example, where we end up with two CW complexes that are not cell equivalent. 

\begin{example}\label{ex:not-cell-equiv}
    In the vector field $X$ displayed in \Cref{fig:perturbation2}, we again have one closed orbit, but this time we have more additional rest points. There are three sinks on the inside of the closed orbit, so when we replace it by a source and a saddle, it is not clear a priori to which of these three sinks the flow line starting at the newly introduced saddle should connect to. The first two choices $X_1$ and $X_2$ lead to cell equivalent CW decompositions, but the CW complex resulting from the third perturbation $X_3$ is not cell equivalent to the other two. To see why this is the case, note that for $X_1$ and $X_2$ the number of saddles that each sink is connected to is 1,2,3,4. For $X_3$ however it is 1,3,3,3. This implies that the the posets given by the relation $\le$ on the cells introduced earlier are not isomorphic, hence the CW complexes cannot be cell equivalent. 
\end{example}

\begin{figure}[t]
    \centering
    \begin{subfigure}{0.4\linewidth}
      \includegraphics[width=\linewidth]{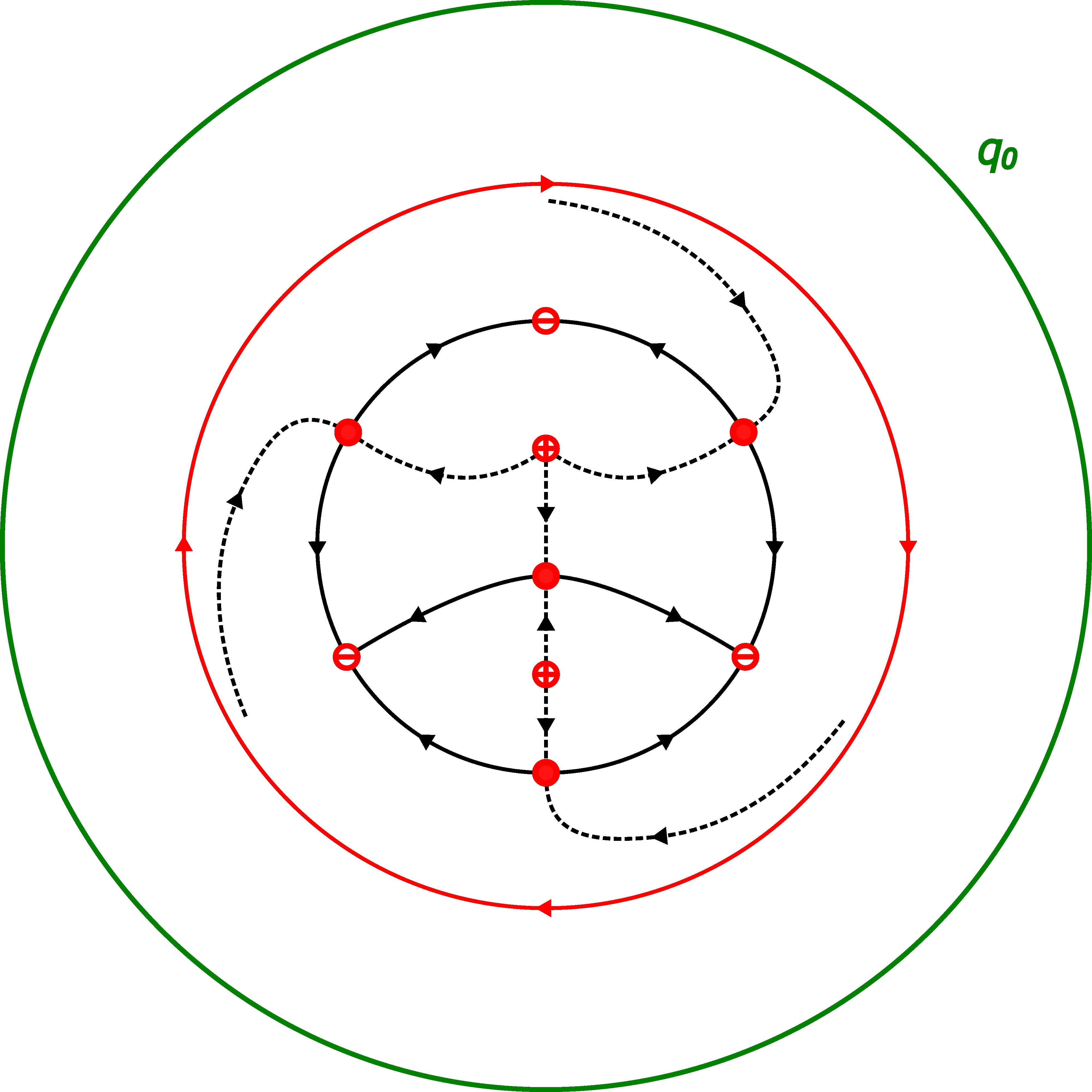}
      \caption{The original vector field $X$}\label{subfig:pert2-a}
    \end{subfigure}
    \hspace{0.02\linewidth}
    \begin{subfigure}{0.4\linewidth}
      \includegraphics[width=\linewidth]{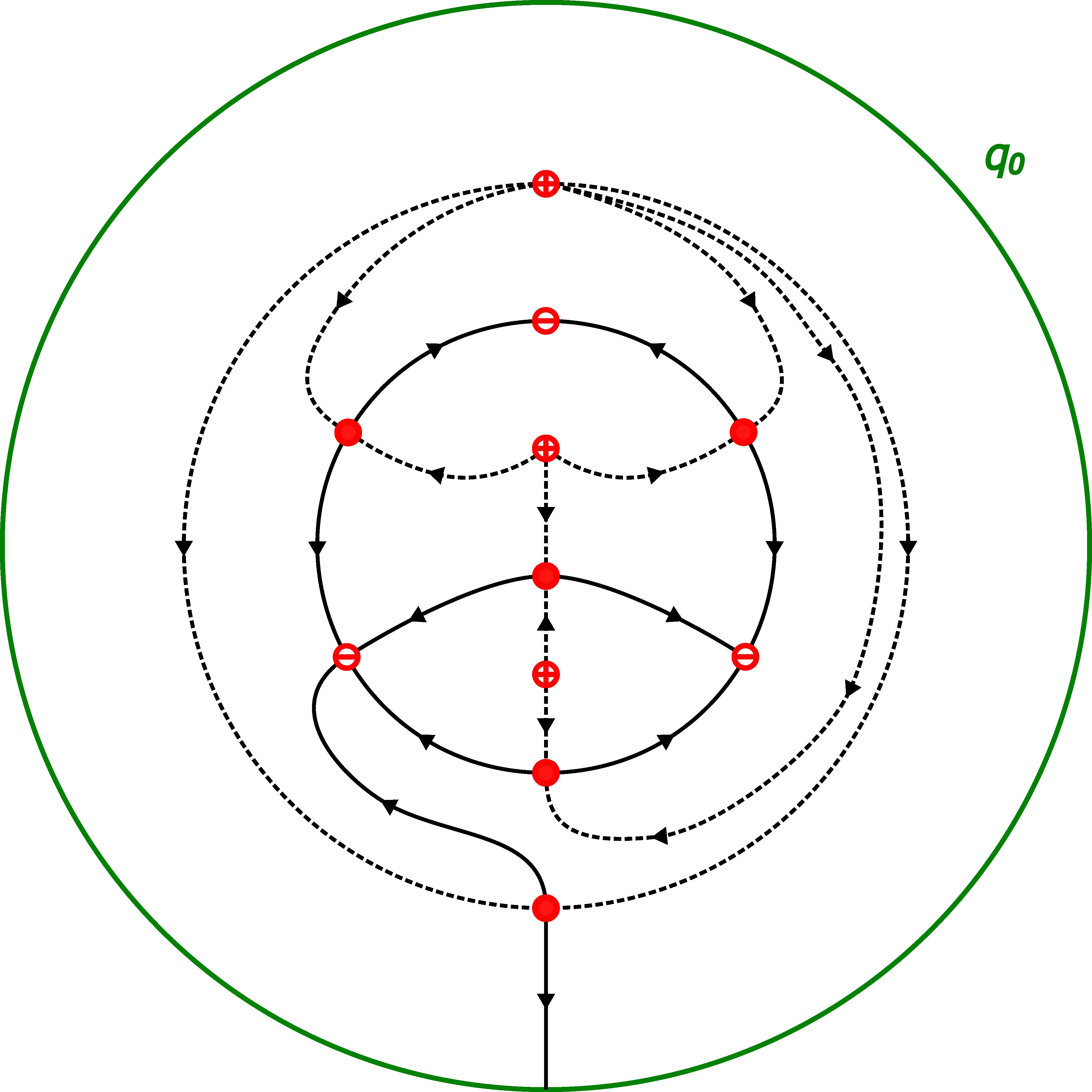}
      \caption{The perturbation $X_1$}\label{subfig:pert2-b}
    \end{subfigure}

    \begin{subfigure}{0.4\linewidth}
      \includegraphics[width=\linewidth]{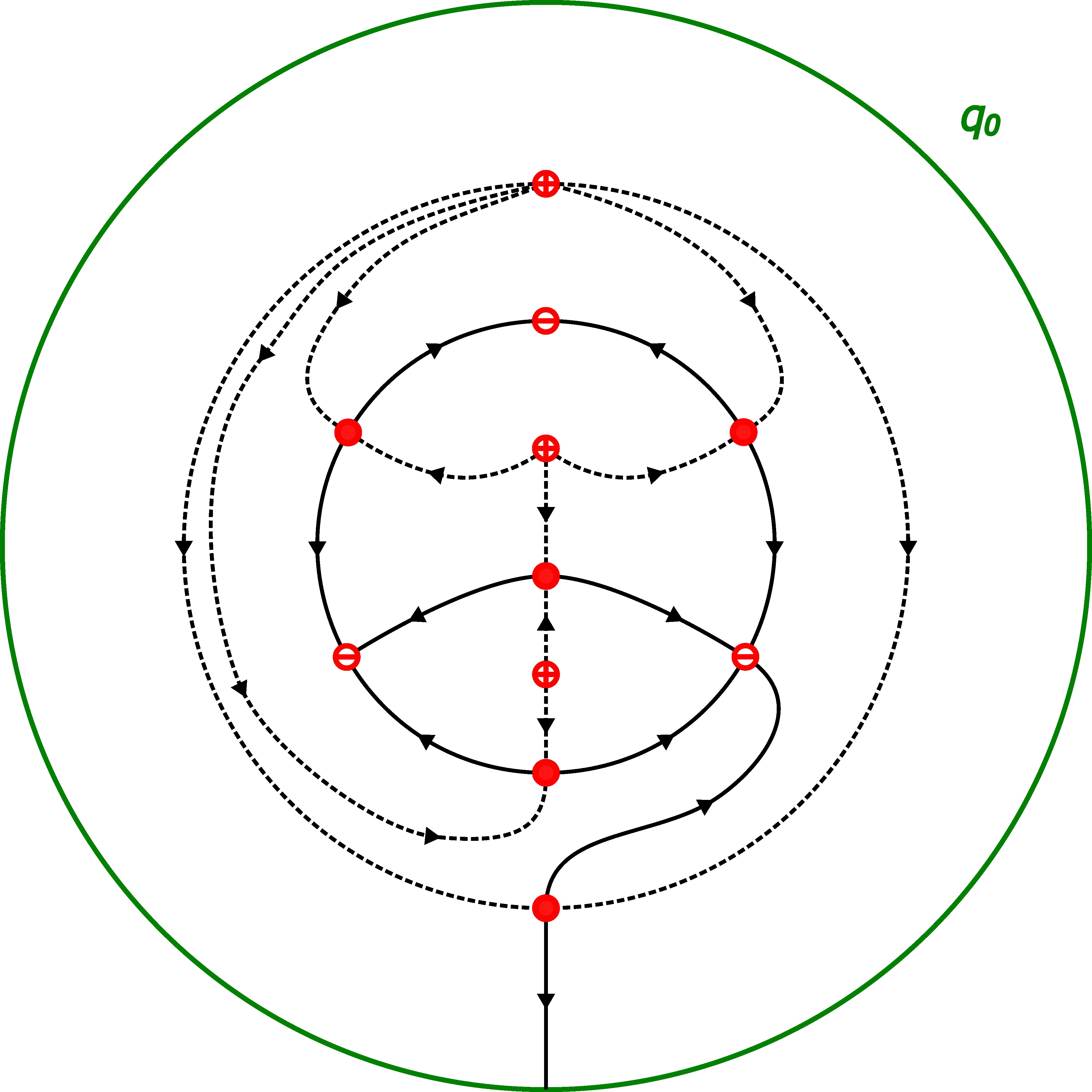}
      \caption{The perturbation $X_2$}\label{subfig:pert2-c}
    \end{subfigure}
    \hspace{0.02\linewidth}
    \begin{subfigure}{0.4\linewidth}
      \includegraphics[width=\linewidth]{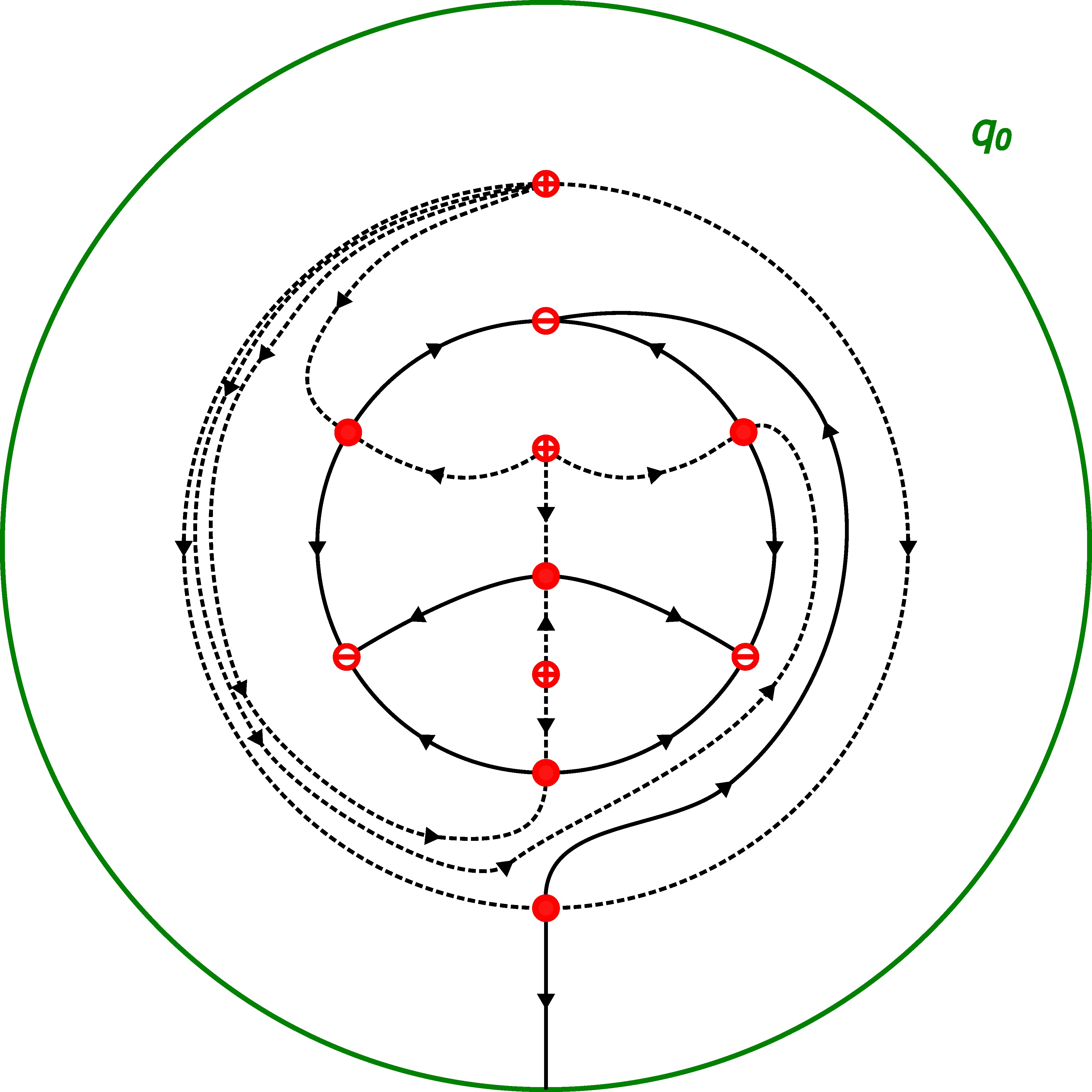}
      \caption{The perturbation $X_3$}\label{subfig:pert2-d}
    \end{subfigure}
    \caption{In this example, the CW complexes assigned to the perturbations $X_1$ and $X_2$ are cell equivalent to each other, but not to the CW complex of $X_3$. We omit drawing any additional flow lines in order not to overload the pictures.}
    \label{fig:perturbation2}
\end{figure}

\section{Non-uniqueness for chain complexes}\label{sec:chain}

In \cite{EidiJost2022}, given a generalized Morse-Smale vector field, Eidi and Jost construct a sequence of vector spaces and linear maps. This is done directly in terms of the critical elements and their intersections of stable and unstable manifolds, without referring to any perturbations. We repeat the main definitions here, for the simpler case of Morse-Smale vector fields (i.e. we exclude homoclinic orbits). In this section we work with coefficients in the field $\Fi:=\Z_2$.

Given a Morse-Smale vector field $X$ and two critical elements $\beta_1$, $\beta_2$ of $X$ (i.e. rest points or closed orbits), we define $\alpha(\beta_1,\beta_2)$ to be the number of connected components (mod 2) of $W^u(\beta_1) \cap W^s(\beta_2)$. 

\begin{definition}
    Given a Morse-Smale vector field $X$ on a closed smooth manifold $M$, the Eidi-Jost complex $(\operatorname{C}^{EJ}_\bullet,\partial^{EJ}_\bullet)$ is defined as follows:
    \begin{itemize}
        \item For any $k\ge 0$,  $\operatorname{C}^{EJ}_k=\operatorname{C}^{EJ}_k(X)$ is the free $\Fi$-vector space generated by all rest points of index $k$, all closed orbits of index $k$ and all closed orbits of index $k-1$. Denote by $\B^{EJ}_k=\B^{EJ}_k(X)$ this generating set (i.e. the canonical basis for $\operatorname{C}^{EJ}_k$).
        \item The linear map $\partial^{EJ}_k=\partial^{EJ}_k(X)\colon \operatorname{C}^{EJ}_k \to \operatorname{C}^{EJ}_{k-1}$ is defined by setting, for all $\beta \in \B^{EJ}_k$,
        \[
        \partial^{EJ}_k(\beta) := \sum_{\beta' \in \B^{EJ}_{k-1}} \alpha(\beta,\beta') \beta' 
        \]
        and extending linearly.
    \end{itemize}
\end{definition}

Note that a closed orbit $\gamma$ of index $k$ contributes one copy to $\B^{EJ}_k$ and $\B^{EJ}_{k+1}$ each. We distinguish the two by adding a minus and a plus sign, i.e. we write $\gamma^- \in \B^{EJ}_k$ and $\gamma^+ \in \B^{EJ}_{k+1}$. 

There is an issue with the proof that $\partial^{EJ}_\bullet$ squares to zero presented in \cite{EidiJost2022}. \Cref{prop:replace-closed-orbit} is used in that proof and it seems as though the uniqueness (which does not hold due to our examples from \Cref{sec:CW}) is assumed implicitly.
We present an alternative proof that works only in the case of two-dimensional manifolds. 

\begin{proposition}
    If $M$ is a two-dimensional closed smooth manifold and $X$ is a Morse-Smale vector field on $M$, then $\partial^{EJ}_1 \circ \partial^{EJ}_2 =0$.
\end{proposition}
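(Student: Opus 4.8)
The plan is to reduce the identity to a single, purely two-dimensional count of flow lines. In dimension two the critical elements are exactly sinks (index $0$), saddles (index $1$), sources (index $2$), attracting closed orbits (index $0$) and repelling closed orbits (index $1$). Writing $p$ for a source, $s$ for a saddle, $r$ for a sink, $a$ for an attracting orbit and $b$ for a repelling orbit, the generating sets are $\B^{EJ}_2=\{p,\ b^+\}$, $\B^{EJ}_1=\{s,\ a^+,\ b^-\}$ and $\B^{EJ}_0=\{r,\ a^-\}$. Since $\partial^{EJ}_1\circ\partial^{EJ}_2$ is linear it suffices to show that for every $\beta\in\{p,\,b^+\}$ and every $\rho\in\{r,\,a^-\}$ the coefficient $\sum_{\sigma\in\B^{EJ}_1}\alpha(\beta,\sigma)\,\alpha(\sigma,\rho)$ vanishes mod $2$.

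First I would record the $\alpha$'s forced to vanish by the dynamics. A source cannot flow into a repeller, so $W^u(p)\cap W^s(b)=W^u(p)\cap b=\emptyset$ and $\alpha(p,b)=0$; dually $\alpha(a,r)=0$, and $\alpha(a,a')=0$, $\alpha(b,b')=0$ for distinct orbits. The one delicate point is the diagonal term $\alpha(\gamma,\gamma)$ of an orbit: as $X$ is Morse--Smale it has no homoclinic trajectory, so $W^u(\gamma)\cap W^s(\gamma)$ is the orbit $\gamma$ itself and carries no connecting trajectory, whence $\alpha(\gamma,\gamma)=0$ (equivalently, in the normal form of \Cref{prop:replace-closed-orbit} the two resulting rest points are joined by exactly two flow lines, which is $0$ in $\Fi=\Z_2$). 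Feeding these vanishings into the four coefficients collapses each of them to the single sum $\sum_s\alpha(\beta,s)\,\alpha(s,\rho)$ over saddles $s$. In the case $\beta=b^+,\rho=a^-$ the two diagonal routes $b^+\to a^+\to a^-$ and $b^+\to b^-\to a^-$ moreover cancel in pairs, so the conclusion there is in fact insensitive to the diagonal convention.

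It remains to prove $\sum_s\alpha(\beta,s)\,\alpha(s,\rho)\equiv 0$ whenever $\dim W^u(\beta)=2$ (so $\beta$ is a source or repelling orbit) and $\dim W^s(\rho)=2$ (so $\rho$ is a sink or attracting orbit), and here I would argue directly from the planar structure of the flow. Let $S_\beta$ be the boundary of a small tubular neighbourhood of $\beta$ (a circle around a source, one or two circles around an orbit); every trajectory leaving $\beta$ crosses $S_\beta$ exactly once, so $\alpha(\beta,s)$ equals, mod $2$, the number of points of the finite set $S_\beta\cap W^s(s)$. Off $\bigcup_s W^s(s)$ the forward trajectory of a point of $S_\beta$ limits onto a sink or an attracting orbit, and this destination is locally constant, changing only across a crossing $x\in S_\beta\cap W^s(s)$; at such a crossing the destinations of the two adjacent arcs are the forward limits of the two unstable separatrices of $s$, so the number of adjacent arcs with destination $\rho$ equals the integer $\alpha(s,\rho)\in\{0,1,2\}$. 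Double-counting the pairs (arc with destination $\rho$, endpoint of that arc) gives on one side $2\cdot\#\{\rho\text{-arcs}\}$ and on the other $\sum_s\#(S_\beta\cap W^s(s))\cdot\alpha(s,\rho)$; hence the latter is even, and reducing mod $2$ yields $\sum_s\alpha(\beta,s)\,\alpha(s,\rho)\equiv 0$, as desired.

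I expect the main obstacle to be the structural input underlying this arc argument: that on a closed surface a Morse--Smale trajectory from $\beta$ has a well-defined eventual destination, a sink or attracting orbit, that this destination is locally constant and jumps only across the stable manifolds of saddles (so closed orbits never occur as intermediate breaking points), and that each $W^s(s)$ meets $S_\beta$ transversally in finitely many points. This is where dimension two is essential, resting on the Poincar\'e--Bendixson description of limit sets and the rigidity of separatrix configurations of surface flows, and it is precisely the compactness phenomenon that ceases to be so rigid in higher dimensions. A secondary point demanding care is the normalization $\alpha(\gamma,\gamma)=0$: this diagonal term is exactly where the implicit appeal to \Cref{prop:replace-closed-orbit} and its non-unique perturbations would enter, so it must be pinned down from the definition of the complex rather than from a chosen perturbation.
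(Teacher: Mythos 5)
Your argument is correct, but it takes a genuinely different route from the paper's. The paper proceeds by induction on the number of closed orbits: it uses \Cref{prop:replace-closed-orbit} to trade one closed orbit $\gamma$ for a pair of rest points $x,y$, invokes the inductive hypothesis (base case: the classical Morse complex) to get $\partial'_1\circ\partial'_2=0$ for the perturbed field, and then compares matrices, observing that $\partial_2=\partial'_2$, that $\partial_1$ and $\partial'_1$ differ only in the column of $\gamma^-$ resp.\ $y$, and that the corresponding row of $\partial_2$ resp.\ $\partial'_2$ vanishes, so the two products agree; the non-uniqueness of the perturbation is harmless there precisely because the ambiguous column is annihilated by a zero row. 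You avoid the perturbation entirely: after killing the orbit--orbit and source--repeller terms you reduce every entry of $\partial^{EJ}_1\circ\partial^{EJ}_2$ to $\sum_s\alpha(\beta,s)\,\alpha(s,\rho)$ and prove this is even by the classical arc-counting (broken-trajectory) argument on a transverse circle around $\beta$. Your version is self-contained, needs no appeal to Morse homology or to \Cref{prop:replace-closed-orbit}, and makes visible exactly where dimension two enters (Poincar\'e--Bendixson, the absence of saddle--saddle connections forced by transversality of two $1$-manifolds in a surface, and the decomposition of the circle into finitely many arcs with locally constant destination); the paper's version is shorter and reuses known machinery. One point you should state more carefully: the normalization $\alpha(\gamma,\gamma)=0$ does not follow from the literal definition of $\alpha$ given in the paper, since $W^u(\gamma)\cap W^s(\gamma)=\gamma$ has \emph{one} connected component, not zero; it is, however, the convention the paper actually uses in its examples (the $\gamma^-$ entry of $\partial^{EJ}_2(\gamma^+)$ is listed as $0$), and it is essential rather than cosmetic --- in the entry $\langle\partial^{EJ}_1\partial^{EJ}_2\, b^+, r\rangle$ the term $\alpha(b,b)\,\alpha(b,r)$ does not cancel against anything (consider $S^2$ with one repelling orbit, two sinks and no saddles), so with the opposite convention the proposition would be false. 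Your aside that the diagonal convention only matters in the $(b^+,a^-)$ entry is therefore misleading; better to fix $\alpha(\gamma,\gamma)=0$ as part of the definition up front, as you in fact do when collapsing all four coefficients.
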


\begin{proof}
    Let $M$ be a 2-dimensional manifold and let $X$ be a Morse-Smale vector field on $M$. Write $(C_\bullet,\partial_\bullet)$ and $\B_\bullet$ for the Eidi-Jost complex and the corresponding bases. We want to show that $\partial_1 \circ \partial_2 = 0$. We do an induction over the number $m$ of closed orbits of $X$.

    If $m=0$, then $(C_\bullet,\partial_\bullet)$ is equal to the usual chain complex from Morse homology, for which it is known that the differential squares to zero \cite{BanyagaLecturesOnMorseHomology}.

    We now assume $m>0$ and that we have shown the statement for any Morse-Smale vector field with $m-1$ closed orbits. Let $\gamma$ be a closed orbit of $X$ of index $k$ (where either $k=0$ or $k=1$). We apply \Cref{prop:replace-closed-orbit} to obtain a new Morse-Smale vector field $X'$ which agrees with $X$ outside of a small neighbourhood of $\gamma$ and that instead of the closed orbit $\gamma$ has two rest points $x$ and $y$ of index $k+1$ and $k$, respectively. These two rest points are connected by two flow lines, going from $x$ to $y$. We denote by $(C'_\bullet,\partial'_\bullet)$ and $\B'_\bullet$ the Eidi-Jost complex and the corresponding bases for $X'$. Note that $\B'_{k+1} = (\B_{k+1}\setminus \{\gamma^+\}) \sqcup \{x\}$ and $\B'_k = (\B_k\setminus \{\gamma^-\}) \sqcup \{y\}$. By the induction hypothesis we know that $\partial'_1 \circ \partial'_2 = 0$.

    Let us from now on assume that $k=1$, i.e. $\gamma$ is a repelling orbit. The proof for $k=0$ is analogous and we omit it. For simplicity we identify $\partial_2,\partial_1,\partial'_2,\partial'_1$ with the corresponding representation matrices with respect to the bases $\B_\bullet$ and $\B'_\bullet$, which we assume to be ordered in some way. We claim that the following hold.

    \begin{enumerate}[(i)]
        \item The row corresponding to $\gamma^-$ in $\partial_2$ and the row corresponding to $y$ in $\partial'_2$ are both zero. \label{item:EJ-zero-1}
        \item The matrices $\partial_2$ and $\partial'_2$ are the same. \label{item:EJ-zero-2}
        \item The matrices $\partial_1$ and $\partial'_1$ differ only in the column corresponding to $\gamma^-$ (in $\partial_1$) and $y$ (in $\partial'_1$). \label{item:EJ-zero-3}
    \end{enumerate}

    As for (\ref{item:EJ-zero-1}), note that $\gamma$ is a repeller, so $\alpha(\beta,\gamma)=0$ for all other critical elements $\beta$. Thus the row corresponding to $\gamma^-$ in $\partial_2$ is zero. For $\partial'_2$ on the other hand, there are two flow lines from $x$ to $y$ and no other flow lines from any other critical element to $y$ (because $y$ is a saddle point). These two flow lines cancel out due to $\Z_2$ coefficients and thus also the row corresponding to $y$ in $\partial'_2$ is zero.

    For (\ref{item:EJ-zero-2}) and (\ref{item:EJ-zero-3}), note that all remaining entries are defined by counting intersections of stable and unstable manifolds of critical elements different from $\gamma$. Assuming that we have chosen small enough the neighbourhood of $\gamma$ on which $X$ and $X'$ differ, these intersection numbers are the same for $X$ and $X'$, hence the according entries in the boundary matrices are the same.

    From these three claims it follows now that $\partial_1 \circ \partial_2 = \partial'_1 \circ \partial'_2$, since the only difference between the two is in one column of $\partial_1$ (resp. $\partial'_1$) that gets multiplied by a zero row in $\partial_2$ (resp. $\partial'_2$) anyway. Hence, as we knew already that $\partial'_1 \circ \partial'_2=0$, it follows that $\partial_1 \circ \partial_2=0$.
\end{proof}

\begin{figure}[ht]
    \centering
    \includegraphics[width=8cm]{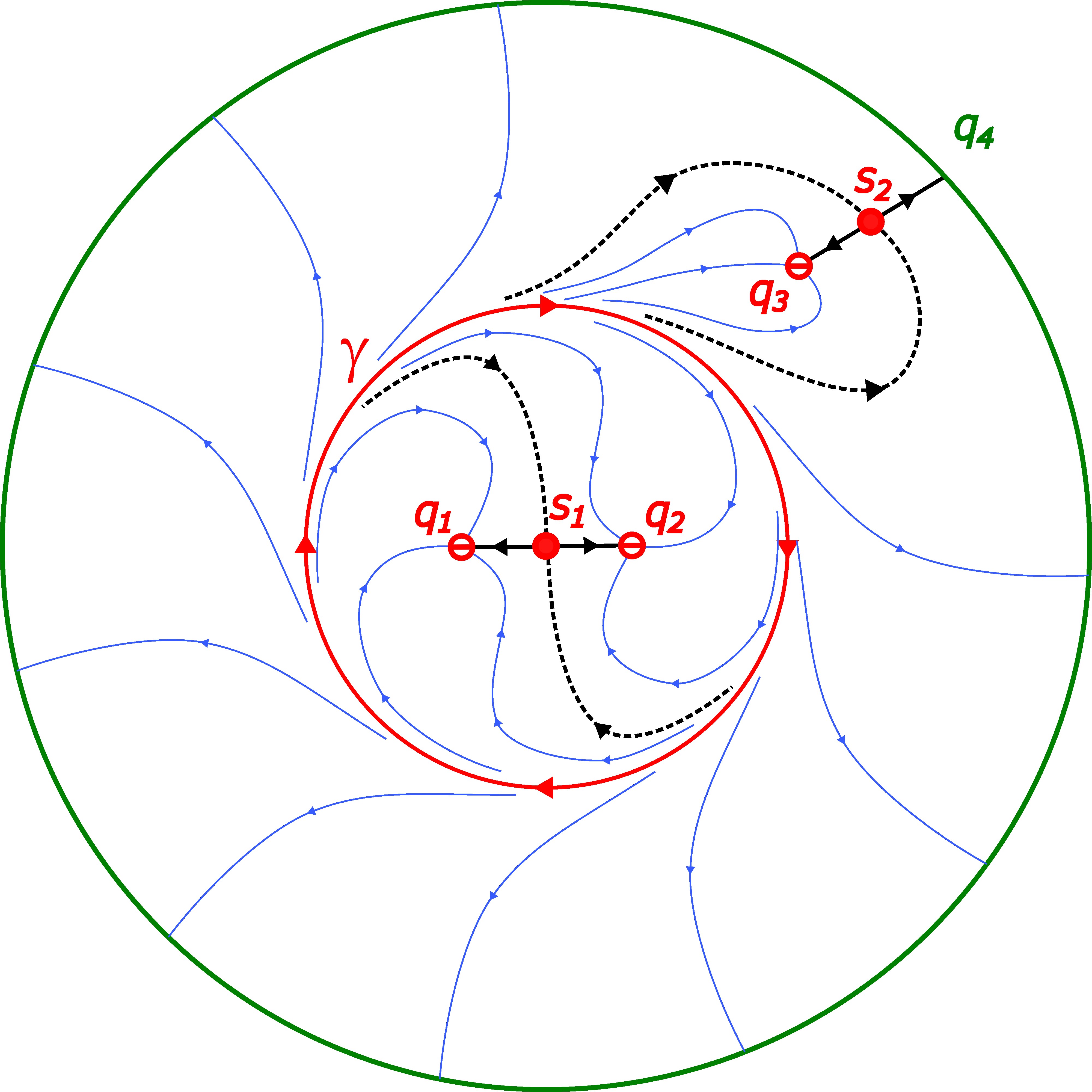}
    \caption{This is an example of a Morse-Smale vector field on the 2-sphere, where the homology of the Eidi-Jost complex differs from the singular homology of $S^2$.}
    \label{fig:closed-orbit-1}
\end{figure}

Now we show by an example that the homology of the Eidi-Jost complex of a Morse-Smale vector field on a surface can be different from the singular homology of the surface, implying that the proof of \cite[Theorem 2.5]{EidiJost2022} is flawed (this has been acknowledged by the authors of \cite{EidiJost2022} by private communication).

\begin{example}
    For the vector field depicted in \Cref{fig:closed-orbit-1}, the associated Eidi-Jost complex looks as follows.
        \begin{center}
        \begin{tikzcd}[ampersand replacement=\&]
            \Fi= \Span(\gamma^+) \ar[rr,"\partial^{EJ}_2 = \begin{bmatrix}
                0\\
                0\\
                0
            \end{bmatrix}"]
            \&\&
            \Fi^3=\Span(s_1,s_2,\gamma^-) \ar[rr,"\hspace{-15pt}\partial^{EJ}_1 = {\begin{bmatrix}
                1 &0 &1\\
                1 &0 &1\\
                0 &1 &1\\
                0 &1 &1
            \end{bmatrix}}"]
            \&\&\Fi^4=\Span(q_1,q_2,q_3,q_4). 
        \end{tikzcd}
        \end{center} 
    Indeed, since $\partial^{EJ}_1$ in this example has rank $2$, it follows that we get a one-dimensional homology group in degree $1$ and two-dimensional homology in degree $0$. This does not agree with the homology of the $2$-sphere.
\end{example}

\begin{figure}[t]
    \centering
    \begin{subfigure}{0.45\linewidth}
      \includegraphics[width=\linewidth]{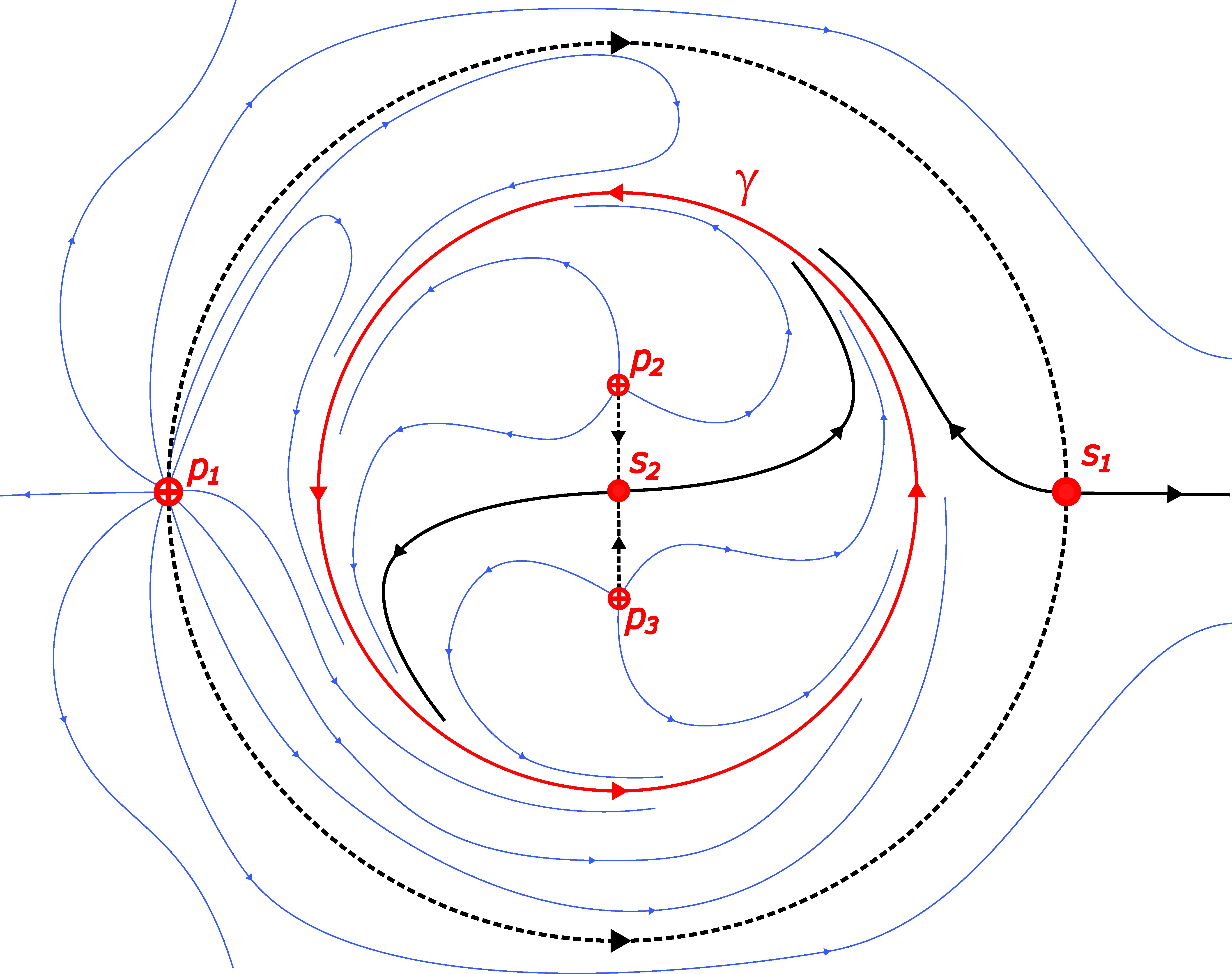}
      \caption{The Morse-Smale vector field $X$ on $\R^2$}\label{subfig:MSvf-R2}
    \end{subfigure}
    \hspace{0.05\linewidth}
    \begin{subfigure}{0.45\linewidth}
      \includegraphics[width=\linewidth]{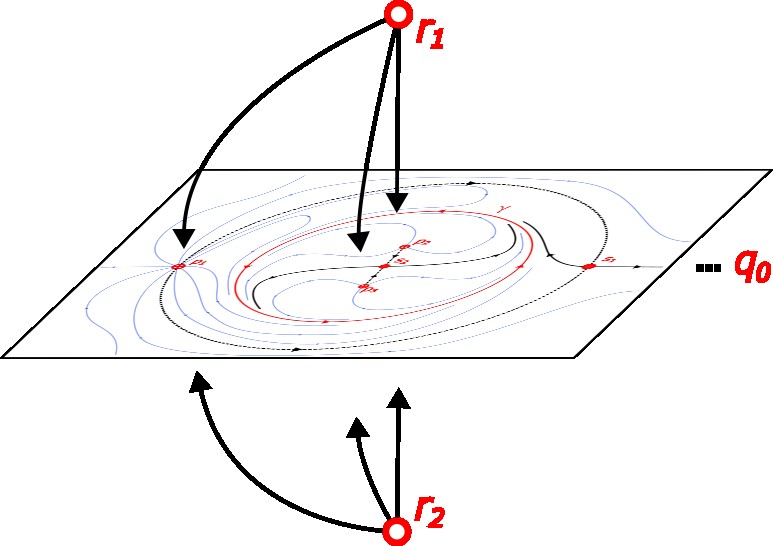}
      \caption{The extension $w$ is a vector field on $\R^3$ with a sink at infinity.}\label{subfig:MSvf-R3}
    \end{subfigure}
    \caption{This is an example of a Morse-Smale vector field on $S^3$ for which the Eidi-Jost differential does not square to zero.}
    \label{fig:3D-vector-field}
\end{figure}

Moreover, in the following example, the Eidi-Jost differential does not square to zero.

\begin{example}
    We build a $3$-dimensional Morse-Smale vector field $V$ as follows. Consider first the vector field $X$ on $\R^2$ from \Cref{subfig:MSvf-R2}. It has sources $p_1,p_2,p_3$, saddles $s_1,s_2$, and a closed orbit $\gamma$ of index $0$. Outside of what is shown in the image, the vector field continues to the rest of $\R^2$ without any further critical elements, all of the flow lines going towards infinity. More precisely, we may assume that the point $s_2$ lies at the origin and that for $\norm{\Vec{x}}$ large enough, $X(\Vec{x})=\Vec{x}$. 

    We now describe how to extend $X=(X_1,X_2)$ to a vector field $V$ on $\R^3$. The idea is that on the plane $\R^2 \times \{0\} \subseteq \R^3$, $V$ agrees with $X$ and outside of this plane, in a small neighbourhood of $\R^2\times \{0\}$ everything flows toward that plane, and at the points $(0,0,1)$ and $(0,0,-1)$ we have two rest points of index $3$, see \Cref{subfig:MSvf-R3}. More explicitly, we choose a value $0< \varepsilon < \frac{1}{2}$ and demand that $V$ has the following properties:
    \[
    V(x,y,z) = \begin{cases}
        (X_1(x,y),X_2(x,y),-z) &\text{if } |z| < \varepsilon \\
        (x,y,z-1) &\text{if } z > \frac{3}{4} \\
        (x,y,z+1) &\text{if } z < -\frac{3}{4}. \\
    \end{cases}
    \]
    On the slices $\R^2 \times [\varepsilon,\frac{3}{4}]$ and $\R^2 \times [-\frac{3}{4},-\varepsilon]$ we interpolate somehow, the exact way we do this is not important as long as we introduce no additional critical elements. All the critical elements of $X$ become critical elements of $V$. One can check that they are still hyperbolic and that their unstable manifolds stay the same, while their stable manifolds get one extra dimension, locally perpendicular to the plane $\R^2\times \{0\}$. This means that they keep the same index. Additionally, $V$ has the rest points $r_1$ and $r_2$ of index $3$. The vector field $V$ is thus again a Morse-Smale vector field.

    We can rescale $V$ so that $\norm{V(\Vec{x})} \to 0$ when $\norm{\Vec{x}}\to \infty$, which allows us to add a rest point for $V$ of index $0$, call it $q_0$, at infinity and thus view $V$ as a vector field on $S^3$. In order to write down the Eidi-Jost complex, note that $\alpha(r_i,p_j)=1$ for $i=1,2$ and $j=1,2,3$. The other relevant values of $\alpha$ can be read off \Cref{subfig:MSvf-R2}. The resulting sequence is
    \begin{center}
        \begin{tikzcd}[ampersand replacement=\&]
            \Span(r_1,r_2) \ar[rr,"\hspace{-15pt}\partial^{EJ}_3 = {\begin{bmatrix}
                1 &1\\
                1 &1\\
                1 &1
            \end{bmatrix}}"]
            \&\&
            \Span(p_1,p_2,p_3) \ar[rr,"\hspace{-15pt}\partial^{EJ}_2 = {\begin{bmatrix}
                0 &0 &0\\
                0 &1 &1\\
                1 &1 &1
            \end{bmatrix}}"]
            \&\&
            \Span(s_1,s_2,\gamma^+) \ar[rr,"\hspace{-15pt}\partial^{EJ}_1 = {\begin{bmatrix}
                1 &0 &0\\
                1 &0 &0
            \end{bmatrix}}"]
            \&\&\Span(q_0,\gamma^-)
        \end{tikzcd}
    \end{center} 
    and we see that $\partial^{EJ}_2 \circ \partial^{EJ}_3 \neq 0$.
\end{example}

In conclusion, we have highlighted uniqueness issues arising when removing closed orbits from Morse-Smale vector field. This resonates with the results of Reineck about the non-uniqueness of the connection matrix \cite{ReineckConnectionMatrix1990}.

\paragraph{Acknowledgements.} I would like to thank my supervisor Claudia Landi for discussing this topic many times with me and giving helpful suggestions, John Franks for taking my doubts about his article into consideration, even though it was published over 40 years ago, and Marzieh Eidi and Jürgen Jost for their availability and effort to discuss their results.

\bibliographystyle{amsplain}
\bibliography{Bibliography}

\end{document}